\tikzstyle{new vertex style 0}=[fill=black, draw=black, shape=circle]
\tikzstyle{new vertex style 1}=[fill=red, draw=black, shape=circle]
\tikzstyle{new edge style 0}=[->, label]
\tikzstyle{new edge style 1}=[fill=none, scale=2, -]
\tikzstyle{new edge style 2}=[draw=red, ->]
\newtheorem{theorem}{Theorem}[section]
\newtheorem{example}[theorem]{Example}
\newtheorem{lemma}[theorem]{Lemma}
\newtheorem{definition}{Definition}[section]
\newtheorem{corollary}[theorem]{Corollary}
\begin{document}
\title{\bf Hosoya Polynomials of Mycielskian Graphs}
\author{Sanju Vaidya\footnote{Mercy College, 555 Dobbs Ferry, New York, {\tt SVaidya@mercy.edu}}, Aihua Li\footnote{Montclair State University, 1 Normal Avenue, Montclair, NJ 07044 {\tt lia@montclair.edu}}}
\date{}
\maketitle

\begin{abstract}
Vulnerability measures and topological indices are crucial in solving various problems such as the stability of the communication networks and development of mathematical models for chemical compounds. In 1947, Harry Wiener introduced a topological index related to molecular branching. Since then, more than 100 topological indices for graphs were introduced. Many graph polynomials play important roles in measuring such indices. Hosoya polynomial is among many of them. Introduced by Hosoya in 1988, the Hosoya polynomial of a given graph $G$ is a polynomial with the coefficients being the numbers of pairs of vertices in $G$ with all possible distances. For a given graph $G$, an extension graph is called Mycielskian graph of $G$, defined by Mycielski in 1955. In this paper, we investigate relationships between the Hosoya polynomial of any graph and that of its Mycielskian graph. The results are applied to compute the vulnerability measures, closeness and betweenness centrality, and the extended Wiener indices of selected graphs and their Mycielskian graphs. In the network science, these measures are commonly used to describe certain connectivity properties of a network. It is fascinating to see how graph polynomials are useful in other scientific fields.

\

\noindent
{\bf Keywords:} Hosoya polynomial, Wienner index, Macielskian graph, Closeness, Betweenness centrality 
\end{abstract}

\thispagestyle{empty}

\section{Introduction}

Dr. Abhyankar \cite{AS}, the thesis advisor of the first author, wrote a poem, ``Polynomials and Power series, may they forever rule the world! We cannot agree more with his statement on polynomials and power series. In this paper, we discuss a special type of  polynomial called Hosoya polynomial defined by Hosoya in 1988 \cite{HH} (also called Wiener polynomial). It is known that from Hosoya polynomials one can produce various formulas for vulnerability measures of communication networks as well as topological indices of molecular graphs of chemical compounds. 

In the study of communication networks, the vertices correspond to the processors of the networks and the edges correspond to the links among the processors.   In the last twenty years, many scientists introduced various vulnerability measures of graphs for assessing the stability and reliability of communication networks. For example, Dangalchev \cite{D} introduced a new vulnerability measure, the residual closeness, and proved that it is more sensitive than other measures of vulnerability such as graph toughness and integrity. Betweenness Centrality \cite{BBF, CG, CF} of a graph is another important vulnerability measure which is crucial in the analysis of various types of networks such as social networks. 

In a molecular graph of chemical compounds the vertices correspond to the atoms and the edges correspond to the bonds between them. A topological index (connectivity index) is a type of  molecular descriptor that is based on the molecular graph of a chemical compound. In 1947 Harry Wiener \cite{HW} introduced a topological index and correlated the indices with the boiling points of certain chemical compounds, alkanes. In 1988, Hosoya \cite{HH} introduced a polynomial (also called Wiener polynomial due to Sagan, Yeh, and Zhang \cite{SYZ})  which is based on the distances between the vertices of a graph. Many scientists and mathematicians proved relations between the derivatives of the Hosoya polynomial and the extended Wiener indices and developed various mathematical models such as Quantitative Structure-Property Relationships (QSPR) models of chemical compounds \cite{BDGG, C, DB, EIGG, KD, MR} . 

One research challenge is on the computation of the vulnerability measures and other related  topological indices for certain given graphs. In \cite{SV}, by computing Hosoya polynomials, the first author of this paper derived formulas of  vulnerability measures, residual closeness and vertex residual closeness, and a set of indices for various graphs. Those indices include topological indices, Wiener index, Hyper-Wiener index introduced by Randic \cite{MR} , TSZ index introduced by Tratch, Stankevich, and Zeffrov \cite{TSZ} , and Harary index introduced by Plavsic, Nikolic,Trinajstic, and Mihalic \cite{PNT}. 

Mycielski  introduced a fascinating method to construct a class of triangle-free graphs with large chromatic numbers from a given graph $G$ \cite{JM}. Such a graph is called  the Mycielskian graph of $G$. In this paper, the big question we want to answer is: ``What can we obtain for the Hosoya polynomial of the Mycielskian graph of a given graph $G$ from that of the $G$ itself?'' In Section 2 we  review some definitions and existing results about the vulnerability measures and topological indices used in this paper. In Sections 3, we  give a relation between the Hosoya polynomial of a given simple graph and that of its Mycielskian graph (the main result). The explicit formulas for the Hosoya polynomials of several well-known graphs are given. In section 4, we apply the results from section 3 to  establish formulas of the vulnerability measures, closeness and betweenness centrality, and extended Wiener indices of certain graphs. 
Final discussion and  conclusion are give in Section 5.

%%%%%%%%%%%%%%%%%%%%%%%%%%%%%%%%%%%%%%%%%%%%%%%%%%%%
\section{Review of Graph Vulnerability Measures \\
and Topological Indices}

Throughout, the graphs we consider are all simple undirected graphs and we use the notations and terminologies introduced in Cash \cite{C}, Dangalchev \cite{D}, and Vaidya (Joshi) \cite{SV}. 
For any non-negative integers $k, n$ with $k<n$, we adopt the notation $\llbracket k, n\rrbracket=\{k, k+1, \ldots, n\}$. We first  recall  several basic concepts in graph theory which are related to distances. 

\begin{definition}
Let $G=(V(G),E(G))$ be any graph with the vertex set $V(G)=\{1, 2, \ldots , n\}$. Consider two distinct vertices $i$ and $j$ in $V(G)$.
\begin{enumerate}
    \item The distance between  $i$ and $j$ is the length of a shortest path between $i$ and $j$, denoted $d(i, j)$. The diameter of $G$, denoted $D(G)$, is the maximum of all the distances among the vertices of $G$. 
    \item Let $k$ be a vertex of $G$ and $i\ne k\ne j$.  The distance between $i$ and $j$ in the graph $G-k$, the resulting graph after removing $k$  and all the incident edges to $k$ from $G$, is denoted $d_k(i,j)$. \item Let $m$ be a non-negative integer. The number of all the pairs of vertices of $G$ with distance $m$ is denoted $d(G, m)$. 
\end{enumerate}
\end{definition}
Next we review certain vulnerability measures and topological indices of graphs. 
In order to  assess the stability of communication networks, Dangalchev introduced the following graph vulnerability measures, Closeness, and Vertex Residual Closeness \cite{D}. These concepts are popular in network science when a network is represented by a graph. 

\begin{definition} \cite{D}\label{c1}
Let $G=(V,E)$ be any graph with the vertex set $V=\{1, 2, \ldots , n\} $.
\begin{enumerate}
    \item The Closeness, $C(G)$, of $G$ is defined as 
$$C(G) =\sum_{i=1}^n C(i), \quad \mbox{where} \ \ C(i) = \sum_{j\not= i}\frac{1}{2^{d(i,j)}}.$$
Here  $C(i)$ is called the closeness of the vertex $i$.
\item The Vertex Residual Closeness (VRC) of $G$ is defined as 
$$R(G) = \mbox{min}_{k\in V(G)} (C(G-k)).$$
\end{enumerate}
\end{definition} 
Regarding the closeness of the graph $G$, a special interest is on the induced subgraph $G-k$, where $k$ is a selected vertex of $G$. The sum of such closeness defines the vertex residual closeness in (2) above. 

Four similar indices were introduced. First, the Wiener index was introduced in 1947 by Wiener \cite{HW}. In 1990, Tratch, Stankevich, and Zeffrov  introduced the TSZ index \cite{TSZ}; In 1993, Randic  introduced the Hyper-Wiener index \cite{MR}. Also in 1993,  Plavsic, Nikolic, Trinajstic, Mihalic introduced the Harary index \cite{PNT}. The definitions are summarized below.

\begin{definition}\label{c2} 
Let $G=(V(G), E(G))$ be an simple graph. 
\begin{enumerate} 
\item \cite{HW} The Wiener index of $G$ is defined as
$W(G) = \sum_{i>j} d(i,j).$
\item \cite{TSZ} The TSZ index of a graph $G$ is defined as
$$
TSZ(G) = \frac{1}{6} \sum_{i>j} [d(i,j)^3 + \frac{1}{2} \sum_{i>j} d(i,j)^2 + \frac{1}{3} \sum_{i>j} d(i, j).
$$
\item \cite{MR} The Hyper-Wiener index of $G$ is defined as 
$$WW(G) =\frac{1}{2} \sum_{i>j} \left[d(i,j)^2 + d(i, j)\right].$$
\item \cite{PNT} The Harary index of a graph G is defined as
$Har(G) = \sum_{i>j} \frac{1}{d(i, j)}.$
\end{enumerate}
\end{definition}

Hosoya introduced the following polynomial for any given  graph \cite{HH}.

\begin{definition}\label{hp}
Let $G$ be a graph with $n$ vertices and diameter $D=D(G)$. The Hosoya polynomial (also called Wiener polynomial) is defined as follows:
$$
H(G, x) = \sum_{i = 1}^D d(G, k) x^k,
$$
where $d(G, k) $ is the number of pairs of vertices in $G$ having distance $k$, as defined in Definition \ref{c1}(3). 
\end{definition}

Note that from the above definition, $d(G,1)=|E(G)|$. So the coefficient of the $x$-term of $H(G,x)$ is the number of edges of $G$. Also, in the literature, there is another version for the definition of the Hosoya polynomial, which includes the non-zero constant term $|V(G)|$, that is, 
$H(G, x) = \sum_{i = 0}^D d(G, k) x^k. $ However, in this paper,  all of our Hosoya polynomials have 0 constant term. 
Many scientists and mathematicians used the Hosoya polynomial (Wiener polynomial) and its derivatives to investigate quantitative structure-property relationships and quantitative structure-activity relationships  of chemical compounds. For example,  
Estrada -Ivanciuc- Gutman -Gutierrez -Rodriguez  introduced the $n$th order Wiener index as the $n$th derivative of the Hososya polynomial evaluated at 1 and showed their importance in organic chemistry \cite{EIGG}. Moreover, Cash \cite{C} and Bruckler-Doslic-Graovac-Gutman \cite{BDGG}  provided relationships between the derivatives of the Hosoya polynomial and the extended Wiener indices.

\begin{definition} \label{wn}
(Estrada-Ivanciuc-Gutman-Gutierrez-Rodriguez \cite{EIGG}): For any positive integer $n$, the $n$th order Wiener index is defined as follows.
$$
^nW(G) = \left[\frac{d^n H(G, x)}{dx^n}\right]_{x=1}.
$$
\end{definition}

Below we list a few known results about the previously introduced indices, which involve derivatives of the Hosoya polynomials.

\begin{theorem} \label{idn}
(Bruckler, Doslic, Graovac, Gutman \cite{BDGG}, Cash \cite{C}): Let $G$ be any simple graph  and $H(G, x)$ be the Hosoya polynomial of  $G$. Then
\begin{enumerate}
    \item [(i)]  The Wiener index $W(G)= H'(G, 1)$.
\item [(ii)] The Hyper-Wiener index is $ WW(G) = 0.5f''(1) $ where $f(x) = xH(G, x)$.
\item [(iii)]
The TSZ index $TSZ(G)=\frac{1}{6} g'''(1)$, where $g(x) = x^2H(G,x)$.
\item [(iv)] The Harary index $Har(G)=\int_0 ^1 h(x)dx$, where  $h(x) = \frac{H(G, \, x)}{x}$.
\end{enumerate}
\end{theorem}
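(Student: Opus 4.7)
The plan is to prove all four identities by direct termwise manipulation of the finite polynomial $H(G,x)=\sum_{k=1}^{D} d(G,k)\,x^k$. The central observation tying the four parts together is the simple reindexing identity
$$\sum_{k=1}^{D} d(G,k)\,\phi(k) \;=\; \sum_{i>j}\phi(d(i,j)),$$
valid for any function $\phi$, which holds because $d(G,k)$ counts the unordered pairs of vertices at distance exactly $k$ and every pair with $i>j$ contributes to exactly one value of $k\in\llbracket 1, D\rrbracket$.

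For (i), termwise differentiation gives $H'(G,x)=\sum_{k} k\,d(G,k)\,x^{k-1}$, so $H'(G,1)=\sum_{k} k\,d(G,k)$; the reindexing identity with $\phi(k)=k$ immediately produces $W(G)$. For (ii), I would write $f(x)=\sum_{k} d(G,k)\,x^{k+1}$; two differentiations give $f''(1)=\sum_{k} k(k+1)\,d(G,k)$, and multiplying by $\tfrac12$ and reindexing with $\phi(k)=\tfrac12(k^2+k)$ reproduces the definition of $WW(G)$. For (iv), I would expand $h(x)=\sum_{k} d(G,k)\,x^{k-1}$ and integrate termwise on $[0,1]$ to obtain $\sum_{k} d(G,k)/k$; reindexing with $\phi(k)=1/k$ yields $Har(G)$.

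Part (iii) is the one I expect to require the most careful bookkeeping, though it is still routine. I would expand $g(x)=\sum_{k} d(G,k)\,x^{k+2}$ and compute three derivatives to obtain $g'''(1)=\sum_{k} k(k+1)(k+2)\,d(G,k)$. Then the algebraic expansion $k(k+1)(k+2)=k^3+3k^2+2k$ combined with the reindexing identity applied separately to $\phi(k)=k^3$, $\phi(k)=k^2$, and $\phi(k)=k$ gives
$$\tfrac{1}{6}g'''(1)=\tfrac{1}{6}\sum_{i>j}d(i,j)^3+\tfrac{1}{2}\sum_{i>j}d(i,j)^2+\tfrac{1}{3}\sum_{i>j}d(i,j),$$
which is exactly the definition of $TSZ(G)$. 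There is no genuine obstacle in the argument; the only real vigilance needed is in matching the coefficients $\tfrac16,\tfrac12,\tfrac13$ in (iii), since $TSZ(G)$ is itself a weighted mixture of three different distance sums, and the factor $\tfrac16$ in front of $g'''(1)$ must be distributed so that each of the three distance sums acquires the precise weight appearing in Definition \ref{c2}.
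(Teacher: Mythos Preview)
Your argument is correct: the reindexing identity $\sum_{k} d(G,k)\,\phi(k)=\sum_{i>j}\phi(d(i,j))$ is exactly the right tool, and your termwise computations for all four parts are accurate, including the expansion $k(k+1)(k+2)=k^3+3k^2+2k$ that recovers the coefficients $\tfrac16,\tfrac12,\tfrac13$ in the definition of $TSZ(G)$.

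Note, however, that the paper does not supply its own proof of this theorem: it is quoted as a known result from Bruckler--Do\v{s}li\'c--Graovac--Gutman and Cash, and simply stated without argument. So there is nothing in the paper to compare your approach against; you have in fact provided more than the paper does on this point. Your direct verification is the standard one and is entirely adequate.
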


%%%%%%%%%%%%%%%%%%%%%%%%%%%%%%%%%%%%%%%%%%%%%%%
\section{ Hosoya Polynomials}

    In this Section we establish a relation between the Hosoya polynomial of a graph and its Mycielskian graph defined in \cite{JM}. We give a formal definition below.

\begin{definition}\label{mu}
Let $G=(V(G), E(G))$ be a graph with $n$ vertices and $m$ edges. Assume $V(G)=\{v_1, v_2, \ldots ,  v_n\}$.
The Mycielskian graph of $G$, denoted $\mu(G)=(V(\mu(G)), E(\mu(G))$, has $G$ as an induced subgraph, having $n+1$ additional vertices and $2m+n$ additional edges where:   
\begin{eqnarray*}
V(\mu(G))&=& \{v_1, \ldots , v_n, u_1, u_2, \ldots, u_n, w\}\quad \mbox{and} \\
E(\mu(G))&=&E(G)\cup\{ wu_i\mid 1\le i\le n\}\cup\{u_iv_j, u_jv_i\mid \mbox{ if } v_iv_j\in E(G)\}.
\end{eqnarray*}
\end{definition}

We adopt the following popular notations in graph theory: $P_n$ for the path of length $n$ and $C_n$ for the cycle of length $n$, where $n$ is a non-negative integer.  The Mycielskian graphs $\mu(P_1)$ and $\mu(P_2)$ are described in the following example.

\begin{example}\label{mup2}
Obviously, $H(P_1,x)=x$, $H(P_2,x)=2x+x^2$, and $H(C_5,x)=5x+5x^2$
It is straighforwrd to check that $\mu(P_1)\cong C_5$. So $H(\mu(P_1),x)=5x+5x^2$.
The graph $\mu(P_2)$, build from $P_2=v_1v_2v_3$ (in red), is shown below which has 7 vertices and 9 edges. 

\ 

\begin{center}
\begin{tikzpicture}
	\begin{pgfonlayer}{nodelayer}
		\node [fill=red, circle, label={above:$v_2$}] (0) at (0, 1) {};
		\node [fill=black, circle, label={right:$u_3$}] (1) at (3, 0) {};
		\node [fill=black, circle, label={left:$u_1$}] (2) at (-3, 0) {};
		\node [fill=red, circle, label={left:$v_1$}] (3) at (-1.5, 0) {};
		\node [fill=red, circle, label={right:$v_3$}] (4) at (1.5, 0) {};
		\node [fill=black, circle, label={315:$u_2$}] (5) at (0, -1) {};
		\node [fill=black, circle, label={below:$w$}] (6) at (0, -2.2) {};
	\end{pgfonlayer}
	\begin{pgfonlayer}{edgelayer}
		\draw (6) to (1);
		\draw[blue] (5) to (4);
		\draw[blue] (3) to (5);
		\draw (2) to (6);
		\draw[red] (3) to (0);
		\draw (5) to (6);
		\draw[red] (0) to (4);
		\draw [bend left=15, blue] (0) to (1);
		\draw [bend left=15, blue] (2) to (0);
	\end{pgfonlayer}
\end{tikzpicture}
\end{center}

\ 

Note that the diameter of $\mu(P_2)$ is 2, $d(\mu(P_2),1)=9$ (the number of edges), and $d(\mu(P_2),2)=12$ (the number of pairs of vertices with distance 2).  Thus, we have the formula for the Hosoya polynomial:
$$H(\mu(P_2), x)=9x+12x^2 .$$
\end{example}

The degree of the Hosoya polynomial of a graph $G$ is the diameter of $D=D(G)$. In \cite{FMB}, a useful formula, which relates the diameter of $G$ and that of $\mu(G)$, is given.

\begin{theorem} \cite{FMB}
For a graph $G$ without isolated vertices, 
$$D(\mu(G))=\min\left(\max(2, D(G)), 4\right).$$
\end{theorem}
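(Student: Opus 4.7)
My plan is to establish matching upper and lower bounds on $D(\mu(G))$ directly from the explicit adjacency rules of Definition~\ref{mu}, splitting the argument according to the value of $D(G)$.

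First I would collect some universal distance bounds in $\mu(G)$ that use only the hypothesis that no vertex of $G$ is isolated. Directly from the construction: $d(w,u_i)=1$, and the independent set $\{u_1,\ldots,u_n\}$ gives $d(u_i,u_j)\le 2$ through $w$. Picking any neighbor $v_k$ of $v_i$ in $G$, the path $w-u_k-v_i$ yields $d(w,v_i)\le 2$, and chaining two such ``lifts to $w$'' produces a walk $v_i-u_k-w-u_\ell-v_j$ of length $4$ between any two $v$-vertices, so $d(v_i,v_j)\le 4$ always. Since $G$ is an induced subgraph, one also has $d_{\mu(G)}(v_i,v_j)\le d_G(v_i,v_j)$. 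Together these give $D(\mu(G))\le 4$; and because $n\ge 2$ (no isolated vertex forces at least one edge), the non-adjacent pair $u_1,u_2$ gives $D(\mu(G))\ge 2$.

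For the case $D(G)\le 2$, the only non-trivial distance to check is a pair $v_i,u_j$ with $v_iv_j\notin E(G)$. The hypothesis $D(G)\le 2$ furnishes a common neighbor $v_k$ of $v_i,v_j$ in $G$, and then $v_i-v_k-u_j$ is a length-two walk in $\mu(G)$. Combined with the universal bounds, this pins $D(\mu(G))=2$, matching $\min(\max(2,D(G)),4)$.

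For $D(G)\ge 3$ the remaining task is a lower bound of $\min(D(G),4)$. I would pick $v_i,v_j$ with $d_G(v_i,v_j)=\min(D(G),4)$ and argue that $\mu(G)$ contains no shorter walk. The key structural fact is that $w$ has no $v$-neighbors, so on any walk of length at most $3$ between two $v$-vertices $w$ cannot appear as an internal vertex; moreover every edge $u_iv_j\in E(\mu(G))$ corresponds to the $G$-edge $v_iv_j$. Consequently any walk of length $k\le 3$ from $v_i$ to $v_j$ in $\mu(G)$ \emph{projects} to a walk of the same length in $G$ by replacing each intermediate $u_\ell$ with $v_\ell$; a short type-by-type check on the one or two internal vertices confirms the projection is a genuine walk in $G$. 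Hence $d_{\mu(G)}(v_i,v_j)\le k$ implies $d_G(v_i,v_j)\le k$ for $k\le 3$, which supplies the desired lower bound in all three sub-cases $D(G)=3$, $D(G)=4$, and $D(G)\ge 5$.

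The main obstacle is the projection step: one must verify carefully both that $w$ cannot lie on a length-$\le 3$ walk between two $v$-vertices (it has no $v$-neighbor, so cannot occupy positions adjacent to the $v$-endpoints, and in length $\le 3$ every internal position is adjacent to an endpoint), and that the $u\to v$ substitution preserves every adjacency along the walk (this uses the symmetric definition of the edges $u_iv_j$ in $\mu(G)$). Once these two structural observations are isolated, the case analysis is essentially mechanical, and the universal upper bound $\le 4$ together with the inherited bound $\le D(G)$ automatically supply the matching upper bounds.
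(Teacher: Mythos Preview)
The paper does not give its own proof of this theorem; it is quoted from \cite{FMB} and used as a black box, so there is no argument in the paper to compare against. Your direct proof via the adjacency rules of $\mu(G)$ is sound and in fact supplies a justification for a claim the paper later uses without proof (item~(5) in the proof of Theorem~\ref{hmy}, that $d_{\mu(G)}(v_i,v_j)=d_G(v_i,v_j)$ for distances $\le 3$ and equals $4$ otherwise).

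There is one small gap to patch. In your list of ``universal'' bounds you never treat the mixed pairs $u_i,v_j$ (including $i=j$), so neither the conclusion $D(\mu(G))\le 4$ nor, in the case $D(G)=3$, the needed upper bound $D(\mu(G))\le 3$ is fully justified: the inherited bound $d_{\mu(G)}(v_i,v_j)\le d_G(v_i,v_j)$ only covers $v$--$v$ pairs. The fix is immediate from what you already have, since $d_{\mu(G)}(u_i,v_j)\le d_{\mu(G)}(u_i,w)+d_{\mu(G)}(w,v_j)\le 1+2=3$ for all $i,j$; once you insert this line, both upper bounds follow. Your projection argument for the lower bound is correct as written: the two structural facts you isolate (that $w$ cannot sit at an internal position of a $v$--$v$ walk of length at most $3$, and that every non-$w$ edge of $\mu(G)$ projects to an edge of $G$ under $u_\ell\mapsto v_\ell$) are exactly what is needed, and the absence of $u$--$u$ edges guarantees the projected sequence is a genuine walk in $G$.
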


Immediately, we have 
\begin{corollary}\label{dm2}
If $G$ is a graph without isolated vertices, then $\deg(H(\mu(G))\le 4$ and 
$$D(G)=2\iff D(\mu(G))=2.$$
\end{corollary}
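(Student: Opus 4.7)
The plan is to derive both assertions directly from the formula $D(\mu(G)) = \min(\max(2, D(G)), 4)$ established in the preceding theorem, using the observation that by Definition \ref{hp} the degree of a Hosoya polynomial equals the diameter of the underlying graph, so $\deg H(\mu(G), x) = D(\mu(G))$.

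For the degree bound, the outer $\min$ with $4$ in the theorem immediately forces $D(\mu(G)) \le 4$; translating back through the diameter-degree identification gives $\deg H(\mu(G), x) \le 4$. This step is essentially a one-line substitution.

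For the equivalence $D(G) = 2 \iff D(\mu(G)) = 2$, I would split into two directions. For the forward direction, substitute $D(G) = 2$ into the formula: then $\max(2, D(G)) = 2$ and $\min(2, 4) = 2$, so $D(\mu(G)) = 2$. For the converse, assume $D(\mu(G)) = 2$. Since $\max(2, D(G)) \ge 2$ unconditionally, the equality $\min(\max(2, D(G)), 4) = 2$ forces $\max(2, D(G)) = 2$, whence $D(G) \le 2$.

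The one delicate point I anticipate is that the converse direction yields only $D(G) \le 2$ rather than $D(G) = 2$ on the nose: if $G$ is a complete graph with at least two vertices then $D(G) = 1$, yet one can check (by substituting into the theorem) that $D(\mu(G))$ is still $2$. Reconciling this with the stated equivalence therefore requires the implicit assumption $D(G) \ge 2$, i.e.\ that $G$ is not complete; under this mild hypothesis the argument closes. I expect this case distinction, rather than any computation, to be the main subtlety to handle cleanly in the write-up.
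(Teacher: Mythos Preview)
Your approach is exactly what the paper intends: the corollary is stated with no proof beyond the phrase ``Immediately, we have,'' so the authors are implicitly doing precisely the substitution into $D(\mu(G))=\min(\max(2,D(G)),4)$ that you describe.

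You are also right to flag the converse direction as delicate, and in fact you have spotted a genuine imprecision in the corollary as stated. The formula gives $D(\mu(G))=2$ whenever $D(G)\le 2$, so complete graphs (with $D(G)=1$) already violate the literal biconditional; the paper's own Example~\ref{mup2} exhibits this, since $\mu(P_1)\cong C_5$ has diameter $2$ while $D(P_1)=1$. The paper does not address this, and in its later applications only the forward implication is ever used, so the issue is harmless for the rest of the paper but is a real gap in the statement. Your proposed fix of adding the hypothesis $D(G)\ge 2$ (equivalently, $G$ not complete) is the correct repair.
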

For any graph $G$ of diameter 2, we only need to calculate $d(G,1)$ and $d(G,2)$ to obtain $H(\mu(G),x)$. 
\begin{lemma} \label{ud2}
If $G$ is a graph with $n$ vertices and $m$ edges and the diameter of  $\mu(G)$ is 2, then 
$$H(\mu(G),x)=(3m+n)x+\left(2n^2-3m\right )x^2.$$
\end{lemma}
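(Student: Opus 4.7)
The plan is to exploit the fact that when $D(\mu(G))=2$, every pair of distinct vertices in $\mu(G)$ is at distance either $1$ or $2$, so $H(\mu(G),x)$ has only two terms and can be determined by counting edges and subtracting from the total number of vertex pairs.

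First, I would count the vertices of $\mu(G)$. By Definition \ref{mu}, $|V(\mu(G))|=2n+1$, so the total number of unordered pairs of distinct vertices is
$$\binom{2n+1}{2}=\frac{(2n+1)(2n)}{2}=2n^{2}+n.$$

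Second, I would compute $d(\mu(G),1)$, which equals $|E(\mu(G))|$. Reading off the three disjoint pieces of $E(\mu(G))$ directly from Definition \ref{mu}: the edge set $E(G)$ contributes $m$ edges; the star at $w$ contributes the $n$ edges $\{wu_i : 1\le i\le n\}$; and each edge $v_iv_j\in E(G)$ yields the two edges $u_iv_j$ and $u_jv_i$, contributing $2m$ edges in total. These three sets are pairwise disjoint (the first uses only $v$-vertices, the second uses $w$, and the third always uses exactly one $u$ and one $v$), so
$$d(\mu(G),1)=m+n+2m=3m+n.$$

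Third, since $D(\mu(G))=2$ every pair of distinct vertices of $\mu(G)$ contributes to either $d(\mu(G),1)$ or $d(\mu(G),2)$, so
$$d(\mu(G),2)=(2n^{2}+n)-(3m+n)=2n^{2}-3m.$$
Substituting into Definition \ref{hp} yields $H(\mu(G),x)=(3m+n)x+(2n^{2}-3m)x^{2}$, as claimed.

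There is no real obstacle here; the only thing to be careful about is verifying that the three edge families listed in Definition \ref{mu} are genuinely disjoint (so no double counting) and that the total vertex count uses $2n+1$ rather than $2n$. Everything else is a one-line subtraction enabled by Corollary \ref{dm2}.
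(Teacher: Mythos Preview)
Your proof is correct and follows essentially the same approach as the paper: count the $2n+1$ vertices and $3m+n$ edges of $\mu(G)$ from Definition~\ref{mu}, then obtain $d(\mu(G),2)$ by subtracting the edge count from $\binom{2n+1}{2}$. Your version is in fact slightly more detailed, since you explicitly verify the disjointness of the three edge families, whereas the paper simply cites Definition~\ref{mu} for the totals.
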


\proof
Note that $\mu(G)$ has $2n+1$ vertices and $3m+n$ edges by Definition \ref{mu}. Thus $d(\mu(G), 1)$   $=3m+n$ and 
$$d(\mu(G),2)=\binom{2n+1}{2}-(3m+n)=2n^2-3m .$$

A type of join graph from two given graphs has diameter 2. The definition is given below. 

\begin{definition}
Let $G_1=(V(G_1), E(G_1))$ and $G_2=(V(G_2), E(G_2))$ be two graphs. The join graph of $G_1$ and $G_2$ is denoted  as  $G_1\oplus G_2$, whose vertex set is  $V(G_1\oplus G_2)=V(G_1)\cup V(G_2)$ and edge set is $$E(G_1\oplus G_2)=E(G_1)\cup E(G_2)\cup \{v_1v_2\mid v_1\in V(G_1), v_2\in V(G_2)\}.$$
\end{definition}

The join graph $G_1\oplus G_2$ has diameter at most 2. It is because $d(u_1, u_2)=1$ for $u_1\in G_1$ and $u_2\in G_2$. For $u_1,v_1\in G_1$   and take any $v\in G_2$. Then $d(u_1, v_1)\le 2$ by the existence of the path $u_1vu_2$ of length 2. If both $G_1$ and $G_2$ are connected with diameter at least 2, then by Corollary \ref{dm2}, the diameter of $\mu(G_1\oplus G_2)$ is also 2. Then their Hosoya polynomials can be easily formulated.  

\begin{corollary} \label{joint} 
Let $G_1$ be a graph with $n_1$ vertices and $m_1$ edges and $G_2$ be a graph with $n_2$ vertices and $m_2$ edges. Assume both $G_1$ and $G_2$ are connected with diameter at least 2. Then
\begin{eqnarray*}
H(G_1\oplus G_2, x)&=&(m_1+m_2+n_1n_2)x+ \frac{1}{2}\left(n_1^2+n_2^2-n_1-n_2-2m_1-2m_2\right)x^2\quad\mbox{and}\\
H(\mu(G_1\oplus G_2),x)&=&(3m_1 + 3m_2 + 3n_1n_2 + n_1 + n_2)x+
\left(2n_1^2 + 2n_2^2 + n_1n_2 - 3m_1 - 3m_2\right )x^2.
\end{eqnarray*}
\end{corollary}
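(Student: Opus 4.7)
The plan is to handle the two formulas in sequence, first working out $H(G_1\oplus G_2,x)$ directly from the definition of the join, and then feeding the resulting vertex/edge counts into Lemma \ref{ud2} to obtain the Mycielskian formula.

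First I would record the two basic invariants of $G_1\oplus G_2$. By the definition of the join, $|V(G_1\oplus G_2)|=n_1+n_2$, and the edges split into three disjoint groups: the $m_1$ edges of $G_1$, the $m_2$ edges of $G_2$, and the $n_1 n_2$ edges connecting every vertex of $G_1$ to every vertex of $G_2$. Hence $|E(G_1\oplus G_2)|=m_1+m_2+n_1n_2$, which gives $d(G_1\oplus G_2,1)=m_1+m_2+n_1n_2$.

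Next I would verify that the diameter of $G_1\oplus G_2$ is exactly $2$. The inequality $D(G_1\oplus G_2)\le 2$ is explained in the paragraph preceding the corollary (any two vertices in the same part are joined by a length-$2$ path through any vertex of the other part). On the other hand, since $G_1$ has diameter at least $2$, it contains two non-adjacent vertices, and those two vertices remain non-adjacent in $G_1\oplus G_2$, so $D(G_1\oplus G_2)\ge 2$. Consequently the Hosoya polynomial has degree $2$, and the coefficient of $x^2$ is obtained by subtracting $d(G_1\oplus G_2,1)$ from the total number of unordered pairs:
\[
d(G_1\oplus G_2,2)=\binom{n_1+n_2}{2}-(m_1+m_2+n_1n_2)=\tfrac12\bigl(n_1^2+n_2^2-n_1-n_2-2m_1-2m_2\bigr),
\]
after collecting the $n_1 n_2$ terms. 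This yields the first displayed formula.

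For the Mycielskian part I would invoke Corollary \ref{dm2}: since $D(G_1\oplus G_2)=2$, the graph $\mu(G_1\oplus G_2)$ also has diameter $2$ (noting that $G_1\oplus G_2$ has no isolated vertices, because every vertex is joined to the whole other part). Applying Lemma \ref{ud2} with $n:=n_1+n_2$ and $m:=m_1+m_2+n_1n_2$, I would compute
\[
3m+n=3m_1+3m_2+3n_1n_2+n_1+n_2,\qquad 2n^2-3m=2n_1^2+2n_2^2+n_1n_2-3m_1-3m_2,
\]
the latter after expanding $2(n_1+n_2)^2=2n_1^2+4n_1n_2+2n_2^2$ and cancelling $3n_1n_2$. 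Substituting into the formula of Lemma \ref{ud2} gives the second claimed identity.

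The argument is mostly bookkeeping; the only place where I would be careful is in justifying $D(G_1\oplus G_2)=2$ (as opposed to $\le 2$), which is needed so that Corollary \ref{dm2} applies to give $D(\mu(G_1\oplus G_2))=2$ and hence activates Lemma \ref{ud2}. This is where the hypothesis that both $G_1$ and $G_2$ have diameter at least $2$ is used; the rest is routine algebraic simplification.
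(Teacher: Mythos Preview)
Your proposal is correct and follows essentially the same approach as the paper: count vertices and edges of $G_1\oplus G_2$, note that its diameter is $2$, and then apply Lemma~\ref{ud2} with $n=n_1+n_2$ and $m=m_1+m_2+n_1n_2$. You are actually more careful than the paper in two places: you explicitly derive the coefficient of $x^2$ in $H(G_1\oplus G_2,x)$ by subtraction (the paper's proof skips this and only treats the Mycielskian formula), and you justify why the diameter equals $2$ rather than merely $\le 2$, which is indeed needed to invoke Corollary~\ref{dm2} and hence Lemma~\ref{ud2}.
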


\proof
The join graph $G_1\oplus G_2$ has $n_1+n_2$ vertices and its edges are made of the edges of $G_1$, $G_2$, and $u_1u_2$, where $u_1\in G_1$ and $u_2\in G_2$. Thus, $G_1\oplus G_2$  has $m_1+m_2+n_1n_2=d(G_1\oplus G_2 , 1)$ edges and $\mu(G_1\oplus G_2)$ has $3m_1 + 3m_2 + 3n_1n_2 + n_1 + n_2$ edges by Definition \ref{mu}. Furthermore, by Lemma \ref{ud2}, 
$$d(\mu(G_1\oplus G_2, 2 )=2(n_1+n_2)^2-3(m_1+m_2+n_1n_2) 
= 2n_1^2 +2n_2^2+n_1 n_2-3m_1-3m_2 .$$ 

The star graph $S_n$ and the complete bipartite graph $K_{n,m}$ both have diameter 2 for $n\ge 2$ and $m\ge 2$.  The Hososya polynomials of their Mycielskian graphs are given below. We skip the straightforward proof.

\begin{corollary} \label{str} For $n\ge 2$ and  $m\ge 2$ , 
\begin{enumerate}
    \item[(1)]  $H(\mu(S_n),x)=(4n+1)x+(2n^2+n+2)x^2.$
    \item[(2)] 
 $H(\mu(K_{n,m}),x)=(3nm+n+m)x+(2n^2+2m^2+nm)x^2.$
\end{enumerate}
\end{corollary}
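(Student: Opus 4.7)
The plan is to invoke Lemma \ref{ud2} directly, since this corollary is a two-line consequence once the vertex count, edge count, and diameter of the base graphs are confirmed. First I would recall (or fix) the convention that $S_n$ denotes the star $K_{1,n}$, which has $n+1$ vertices and $n$ edges, while $K_{n,m}$ has $n+m$ vertices and $nm$ edges. For $n\ge 2$ (resp.\ $n,m\ge 2$) each of these graphs is connected and has diameter exactly $2$: in $S_n$ any two leaves are joined via the hub, and in $K_{n,m}$ any two vertices on the same side are joined via a common neighbor on the opposite side.

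Next I would apply Corollary \ref{dm2}, which gives $D(\mu(S_n))=D(\mu(K_{n,m}))=2$, so Lemma \ref{ud2} applies to each Mycielskian. The formula from Lemma \ref{ud2} is
\[
H(\mu(G),x)=(3m+n)x+(2n^2-3m)x^2
\]
when $G$ has $n$ vertices and $m$ edges and $D(\mu(G))=2$. I would simply substitute the vertex and edge counts for each case.

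For (1), using $n+1$ vertices and $n$ edges in place of $(n,m)$ in Lemma \ref{ud2}, the coefficient of $x$ becomes $3n+(n+1)=4n+1$ and the coefficient of $x^2$ becomes $2(n+1)^2-3n=2n^2+n+2$, matching the claimed formula. For (2), substituting $n+m$ vertices and $nm$ edges yields $3nm+(n+m)$ for the $x$-coefficient and $2(n+m)^2-3nm=2n^2+2m^2+nm$ for the $x^2$-coefficient, again matching the statement.

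There is no real obstacle here; the only place where one could slip is in the bookkeeping between the abstract $(n,m)$ of Lemma \ref{ud2} and the parameter $n$ (or $n,m$) naming the star or bipartite graph, so I would be explicit about the substitution to avoid confusion.
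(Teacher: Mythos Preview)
Your proposal is correct and follows exactly the approach the paper intends: the corollary is placed immediately after Lemma~\ref{ud2} (and Corollary~\ref{joint}), and the paper explicitly ``skip[s] the straightforward proof,'' so your direct substitution of vertex and edge counts into Lemma~\ref{ud2}, after invoking Corollary~\ref{dm2} to confirm the diameter, is precisely what is expected. Your arithmetic checks out in both cases, and your caution about distinguishing the abstract $(n,m)$ of Lemma~\ref{ud2} from the graph parameters is well placed.
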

\noindent
Next, we develop the main result of the paper that relates the coefficients of the Hosoya polynomial of a graph $G$ and that of $\mu(G)$.

\begin{theorem} \label{hmy}
Let $G$ be a simple connected graph with $n$ vertices, $m$ edges, and diameter $D=D(G)$. Let   $a_i=d(G, i)$. Then the  Hosoya polynomial of $G$ can be expressed as $H(G, x) = \sum_{i=1}^D a_ix^i$. Similarly, write the Hosoya polynomial of $\mu(G)$ in the form of
$H(\mu(G),x) = \sum_{i=1}^4 b_ix^i$. Then
\begin{eqnarray*} 
b_1 &=& 3m + n, \quad b_2 = \frac{(n^2 + 3n)}{2}+3a_2,\\
b_3 &=&  n^2 - 2m - n +a_3- 2a_2, \quad  b_4 =\frac{n(n - 1)}{2} - m - a_2 - a_3.
\end{eqnarray*}
\end{theorem}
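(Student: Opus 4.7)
The plan is to compute each $b_i$ directly by partitioning the $\binom{2n+1}{2}$ unordered vertex pairs of $\mu(G)$ according to which of the three "layers'' $V(G)$, $U=\{u_1,\ldots,u_n\}$, and $\{w\}$ each endpoint lies in, and then to evaluate the $\mu(G)$-distance of each type of pair in terms of the original $G$-distance. The five nonempty types are: (A) both endpoints in $V(G)$, giving $\binom{n}{2}$ pairs; (B) one endpoint in $V(G)$ and one in $U$, giving $n^2$ pairs; (C) both endpoints in $U$, giving $\binom{n}{2}$ pairs; (D) $\{u_i,w\}$, giving $n$ pairs; (E) $\{v_i,w\}$, giving $n$ pairs. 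Corollary \ref{dm2} bounds all distances by $4$.

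Next I would establish the following distance rules by direct case analysis using the adjacencies from Definition \ref{mu}. For a type-(A) pair $\{v_i,v_j\}$, I claim $d_{\mu(G)}(v_i,v_j)=\min(d_G(v_i,v_j),4)$: the $G$-geodesic lives in $\mu(G)$, and for the reverse inequality one checks that a $\mu(G)$-path of length $\ell\le 3$ between $v_i$ and $v_j$ forces a $G$-path of length $\ell$ (any vertex $u_k$ or $w$ on such a short path can be replaced by a $V(G)$-vertex, since the neighbors of $u_k$ in $V(G)$ are exactly the $G$-neighbors of $v_k$, and $w\not\sim v_i$). For type-(B) pairs $\{v_i,u_j\}$: if $i=j$ then $d_\mu=2$ via any $G$-neighbor of $v_i$; if $i\ne j$ then $d_\mu=1$ when $v_iv_j\in E(G)$, $d_\mu=2$ when $d_G(v_i,v_j)=2$ (use a common $G$-neighbor), and $d_\mu=3$ otherwise (use $u_j$-$w$-$u_k$-$v_i$ for any $v_k\in N_G(v_i)$, and argue via the same neighbor analysis that no shorter path exists). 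For types (C), (D), (E) the distances are always $2$, $1$, $2$ respectively.

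With the distance rules in hand, counting the coefficients becomes a bookkeeping step. The equality $b_1=3m+n$ is just $|E(\mu(G))|$ from Definition \ref{mu}. For $b_2$ I add $a_2$ from type (A), $n+2a_2$ from type (B) (the $n$ pairs with $i=j$, plus two mixed pairs $\{v_i,u_j\},\{v_j,u_i\}$ for each of the $a_2$ distance-$2$ pairs in $G$), $\binom{n}{2}$ from (C), and $n$ from (E); simplification produces $\tfrac{n^2+3n}{2}+3a_2$. For $b_3$ only types (A) and (B) contribute: $a_3$ from (A), and from (B) the $n(n-1)-2m-2a_2$ mixed pairs with $d_G(v_i,v_j)\ge 3$, summing to $n^2-2m-n+a_3-2a_2$. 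For $b_4$ only type (A) with $d_G\ge 4$ contributes, giving $\binom{n}{2}-m-a_2-a_3$.

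The main obstacle is the distance analysis for type (A) with $d_G(v_i,v_j)\ge 4$, where I must rule out every length-$3$ $\mu(G)$-path. The argument is a small case split on the middle two vertices of a hypothetical path $v_i{-}x{-}y{-}v_j$: since $w\not\sim v_i$ and each $u_k$-edge into $V(G)$ mirrors a $G$-edge of $v_k$, any such path can be converted into a $G$-path of length $\le 3$, contradicting $d_G(v_i,v_j)\ge 4$. Once this is done, the remaining distance verifications and the arithmetic for $b_1,b_2,b_3,b_4$ are routine.
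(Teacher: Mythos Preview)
Your proposal is correct and follows essentially the same approach as the paper: partition the $\binom{2n+1}{2}$ pairs by layer type, determine the $\mu(G)$-distance of each type in terms of $d_G$, and count. The only noteworthy difference is that the paper computes $b_3$ indirectly via $b_3=(2n^2+n)-b_1-b_2-b_4$ after obtaining $b_1,b_2,b_4$, whereas you count the distance-$3$ pairs directly from types (A) and (B); your route also spells out the ``no length-$3$ path'' argument for type~(A) with $d_G\ge 4$, which the paper asserts but does not prove in detail.
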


\proof  Since $\mu(G)$ has $3m+n$ edges,  $b_1 = 3m + n$. Moreover, by Corollary \ref{dm2}, $D\le 4$ and thus $\deg(H(\mu(G),x))\le 4$. 
Next we examine the pairs  of vertices with specially indicated distances. Recall that $\llbracket 1,n\rrbracket=\{1,2,\ldots , n\}$. 
\begin{enumerate}
    \item[(1)] $\forall i\in \llbracket 1,n\rrbracket, d_{\mu(G)}(w,u_i)=1$ because  $wu_i\in E(\mu(G))$.
    \item[(2)] $\forall i\in  \llbracket 1,n\rrbracket, d_{\mu(G)}(u_i,v_i)=2$. 
    
    First of all,  because $G$ is simple, $u_iv_i\not\in E(\mu(G))$. So $d_{\mu(G)}(u_i,v_i)>1$. Since $G$ is connected, $\exists k\in [1,n], k\ne i,$ such that $v_iv_k\in E(G).$ Thus $u_iv_k, u_kv_i\in E(\mu(G))$ which gives a path $u_iv_kv_i$ of length 2 in $\mu(G)$. Thus, $d_{\mu(G)}(u_i,v_i)=2.$
    \item[(3)]  $\forall i\in  \llbracket 1,n\rrbracket, $ $d_{\mu(G)}(w,v_i)=2$. With the same reason as in (2), $wu_kv_i$ is a path of length 2 from $w$ to $v_i$ and $wv_i\not \in E(\mu(G))$. Thus, $d_{\mu(G)}(w,v_i)=2.$
    \item[(4)] $\forall i,j\in \llbracket 1,n\rrbracket$ with $i\ne j$, $d_{\mu{G}}(u_i,u_j)=2$. From the definition of $\mu(G)$, $u_iu_j\not\in E(\mu(G))$ and $u_iwu_j$ is a path of length 2 in $\mu(G)$.
    \item[(5)] $\forall i,j\in \llbracket 1,n\rrbracket$ with $i\ne j$, $d_{\mu(G)}(v_i,v_j)=k\iff d_{G}(v_i,v_j)=k$ for $k=1, 2, 3$. Furthermore, $d_{\mu(G)}(v_i,v_j)=4\iff d_{G}(v_i,v_j)\ge 4$. 
 
\end{enumerate}
From all the above considerations, 
 $\forall i\in \llbracket 1,n\rrbracket$,  $ d_{\mu(G)}(v_i, u_i) = d_{\mu(G)}(w, v_i)=2$. These are  $2n$such  pairs with distance 2. For $i\ne j$, $d_{\mu(G)}(u_i, u_j ) = 2$, which gives $n(n-1)/2$ pairs of distance 2.  Furthermore, there are $a_2$ pairs of vertices in $V(G)$, say $v_i, v_j$,  satisfying $d_G(v_i, v_j ) = 2$ ($i\ne j$). Each of them produce 3 pairs of vertices in $V(\mu(G))$ with distance 2: $d_{\mu(G)}(v_i, v_j ) =  d_{\mu(G)}(v_i, u_j ) = d_{\mu(G)}(u_i, v_j )=2$. Altogether, We have
$$b_2=3a_2+2n+\frac{n(n-1)}{2}=3a_2+\frac{n^2+3n}{2}.$$
Finally, since $\mu(G)$ has $2n+1$ vertices and $(2n+1)(2n)/2=2n^2+n$ pairs of vertices, $b_1+b_2+b_3+b_4=2n^2+n$. Thus,
$$b_4=a_{\ge 4}=\frac{n(n-1)}{2}-a_1-a_2-a_3=\frac{n(n-1)}{2}-m-a_2-a_3.$$
$$b_3=2n^2+n-b_1-b_2-b_4=n^2-2m-n +a_3- 2a_2.$$.

In Example \ref{mup2}, it was shown that $H(P_2,x)=2x+x^2$ and $H(\mu(P_2),x)=9x+12x^2$. Refer to the notations in the above theorem, for $G=P_2$, we have $n=3, m=2$, $a_1=2, $ and $a_2=1$.  For $\mu(P_2)$, $b_1=9=3(2)+3$ and $b_2=12=(3^2+3(3))/2+3(1)$. It  confirms Theorem \ref{hmy}. 

\ 

In the following corollary, we derive formulas for the topological indices of Mycielskian graphs using Hosoya polynomials. 

\begin{corollary}\label{ind}
 Let $G$ be a graph with $n$ vertices and $m$ edges, and diameter $D$. Consider the  Hosoya polynomial  $H(G, x) = \sum_{i=1}^D a_ix^i$ of  $G$, where $a_i=d(G, i)$. Then 
\begin{enumerate}
\item[(1)]
$W(\mu(G)) = 6n^2 - n - 7m - 4a_2 - a_3.$
\item[(2)] $WW(\mu(G)) = 0.5(25n^2 - 11n) - 19m - 13a_2 - 4a_3$.
\item[(3)] $TSZ(\mu(G)) = 22n^2 - 8n - 22m -28a_2 - 10a_3$.
\item[(4)] $Har(\mu(G)) = \frac{1}{24}\left(17n^2 + 31n + 50m + 14a_2 + 2a_3\right)$.
\item[(5)] For $n\in \{1, 2, 3, 4\}$, $^nW(\mu(G)) =\sum_{i = 1}^ 4 i(i - 1)\cdot \cdots\cdot (i - n+1) b_i$ and $^nW(\mu(G))=0$ for $n\ge 5$.

\end{enumerate}

\proof
The results immediately follow from Theorems \ref{idn} and Theorem \ref{hmy}. 
\end{corollary}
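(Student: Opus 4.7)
The plan is to combine Theorem \ref{hmy}, which gives the coefficients $b_1, b_2, b_3, b_4$ of $H(\mu(G), x)$ explicitly in terms of $n, m, a_2, a_3$, with the differential/integral identities in Theorem \ref{idn} that express each classical index as a linear functional of the Hosoya polynomial. By Corollary \ref{dm2}, $H(\mu(G), x) = b_1 x + b_2 x^2 + b_3 x^3 + b_4 x^4$ has degree at most $4$, so every index in (1)--(4) reduces to a fixed linear combination of the $b_i$'s, independent of $G$.

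The first step is to compute those universal coefficients once. For $W$, differentiating $H(\mu(G), x)$ and evaluating at $x=1$ gives $W(\mu(G)) = b_1 + 2b_2 + 3b_3 + 4b_4$. For $WW$, applying $\tfrac{1}{2}(xH)''$ at $x=1$ turns each $b_i$ into $\binom{i+1}{2} b_i$, yielding $b_1 + 3b_2 + 6b_3 + 10 b_4$. For $TSZ$, computing $\tfrac{1}{6}(x^2 H)'''$ at $x=1$ turns each $b_i$ into $\binom{i+2}{3} b_i$, yielding $b_1 + 4b_2 + 10b_3 + 20 b_4$. For $Har$, integrating $H(\mu(G), x)/x = b_1 + b_2 x + b_3 x^2 + b_4 x^3$ over $[0,1]$ yields $b_1 + b_2/2 + b_3/3 + b_4/4$. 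The second step, carried out separately for each index, is then to substitute the Theorem \ref{hmy} expressions for the $b_i$'s and collect terms in $n^2$, $n$, $m$, $a_2$, $a_3$; the closed forms claimed in (1)--(4) fall out directly.

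Part (5) needs no further calculation once this setup is in place. Since $\deg H(\mu(G), x) \le 4$, the $n$-th derivative vanishes identically for $n \ge 5$, giving $\,^n W(\mu(G)) = 0$ in that range. For $n \in \{1, 2, 3, 4\}$, differentiating $x^i$ exactly $n$ times and evaluating at $x=1$ produces the falling factorial $i(i-1)\cdots(i-n+1)$, so summing over $i = 1, \ldots, 4$ gives the stated formula immediately from Definition \ref{wn}. The only real obstacle in the whole argument is arithmetic bookkeeping while simplifying the linear combinations for $WW$ and $TSZ$, where half-integer multipliers appear before the final tidying; conceptually, the corollary is a direct consequence of Theorems \ref{hmy} and \ref{idn}.
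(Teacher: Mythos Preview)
Your proposal is correct and follows exactly the paper's approach: the paper's own proof is the single sentence that the results follow from Theorems~\ref{idn} and~\ref{hmy}, and you have simply spelled out the linear-combination and substitution steps that this entails. Your explicit identification of the universal coefficients $(1,2,3,4)$, $(1,3,6,10)$, $(1,4,10,20)$, and $(1,\tfrac12,\tfrac13,\tfrac14)$ for $W$, $WW$, $TSZ$, and $Har$ is more detailed than anything the paper records, but the method is identical.
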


Below we apply Theorem \ref{hmy} and Corollary \ref{ind} to the path $P_n$ (which has $n+1$ vertices) and $\mu(P_n)$ for all positive integers  $n\ge 2$. It is easy to obtain 
$H(P_n,x)=\sum_{k=1}^n (n-k+1)x^k$.

\begin{corollary}\label{path}
Let $n$ be any positive integer $\ge 2$. Then
\begin{enumerate}
\item[(1)] 
$H(\mu(P_n), x)=
(4n+1)x+\frac{1}{2}\left(n^2+11n-2\right)x^2+(n^2-2n)x^3+\frac{1}{2}\left(n^2-5n+6\right)x^4$.
\item[(2)]  (Wiener index) $W(\mu(P_n))=6n^2-n+11$.
\end{enumerate}
\end{corollary}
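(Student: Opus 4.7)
The plan is to apply Theorem \ref{hmy} directly to $G = P_n$ and then invoke Corollary \ref{ind}(1) for the Wiener index. A preliminary bookkeeping step will be essential: the symbol $n$ in Theorem \ref{hmy} denotes the number of vertices of the base graph, whereas in Corollary \ref{path} the symbol $n$ indexes the path, which by the convention stated in the excerpt has $N := n+1$ vertices and $M := n$ edges. I will keep these auxiliary names $N$ and $M$ during the substitution so that the simplifications proceed transparently. The coefficients of $H(P_n, x)$ can be read off immediately from the given expansion $\sum_{k=1}^n (n-k+1)x^k$, yielding $a_1 = n$, $a_2 = n-1$, and $a_3 = n-2$. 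The edge cases are harmless: for $n=2$ the value $a_3 = 0$ agrees with $P_2$ having diameter $2$, and for $n=3$ the implicit $a_4 = 0$ matches the formula for $b_4$ below.

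For part (1), I would substitute $N = n+1$, $M = n$, $a_2 = n-1$, $a_3 = n-2$ into the four formulas of Theorem \ref{hmy},
\[
b_1 = 3M+N, \qquad b_2 = \tfrac{N^2+3N}{2} + 3a_2, \qquad b_3 = N^2 - 2M - N + a_3 - 2a_2, \qquad b_4 = \tfrac{N(N-1)}{2} - M - a_2 - a_3,
\]
and collect terms. Each of the four resulting simplifications is a short polynomial manipulation that I expect to produce $b_1 = 4n+1$, $b_2 = \tfrac12(n^2+11n-2)$, $b_3 = n^2-2n$, and $b_4 = \tfrac12(n^2-5n+6)$, matching the claimed expression for $H(\mu(P_n),x)$.

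For part (2), two equally short routes are available. The first is to apply Corollary \ref{ind}(1) with the same substitution, evaluating $6N^2 - N - 7M - 4a_2 - a_3$; the second is to exploit Theorem \ref{idn}(i) and differentiate the polynomial obtained in part (1), computing $b_1 + 2b_2 + 3b_3 + 4b_4$. I would prefer the first route because it references the corollary more directly, but the second provides an immediate consistency check. Either path collapses to $6n^2 - n + 11$.

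The only real obstacle here is the notational overload of $n$; once the replacement $n \mapsto n+1$, $m \mapsto n$ is carried out consistently, the proof is purely computational. There is no conceptual content beyond Theorem \ref{hmy} and Corollary \ref{ind}, since they have already packaged the structural reasoning about $\mu(G)$.
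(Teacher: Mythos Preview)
Your proposal is correct and follows exactly the paper's approach: the paper's proof is the single sentence ``The results follow from Theorem~\ref{hmy} and Corollary~\ref{ind} by setting $a_1=n$, $a_2=n-1$, $a_3=n-2$.'' Your explicit bookkeeping with $N=n+1$ and $M=n$ is a welcome clarification of the notational overload that the paper leaves implicit, but the argument is otherwise identical.
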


\proof

The results follow from the theorem \ref{hmy} and corollary \ref{ind} by setting $a_1=n$, $a_2=n-1, a_3=n-2$. 

%%%%%%%%%%%%%%%%%%%%%%%%%%%%%%%%%%%%%%%%%%%%%%%%%%%%%
\section{Applications - Closeness and Betweeness Centrality} 

In this section we show some applications  of Hosoya polynomials of Mycielskian graphs in describing closeness and betweenness centrality, which are crucial in the analysis of various types of networks such as communication networks and social networks. The definition of the  betweenness Centrality of a graph is as follows. 

\begin{definition} 
Let $G=(V,E)$ be a graph and $u,v,w$ are distinct vertices of $G$. Let $\sigma_{uv}(w)$ be the number of shortest paths from vertex $u$ to vertex $v$ that go through vertex $w$. Let $\sigma_{uv}$ be the total number of shortest paths from vertex $u$ to vertex $v$. 
\begin{enumerate}
    \item The {\it betweenness centrality} $B_w$  of the vertex $w$ is defined as 
    $$B_w = \sum_{u,v\not= w} b_w(u, v)\quad \mbox{with}\quad b_w(u, v) = \frac{\sigma_{uv}(w)}{\sigma_{uv}}.$$ 
    \item   The {\it Betweenness Centrality} of the graph $G$ is defined as  
    $$B^-(G) = \frac{1}{n} \sum_{w\in V} B_w.$$
\end{enumerate}
\end{definition}

Brandes, Borgatti, Freeman  proved a relation between Betweenness Centrality of a graph and the Wiener index \cite{BBF}.
\begin{theorem} \label{b-}
(Brandes, Borgatti, Freeman \cite{BBF}) Let $G$ be a graph with $n$ vertices and $W(G)$ be the Wiener index of $G$. Then the Betweenness Centrality of $G$ is given by 
$$
B^-(G) = \frac{W(G)}{n} - \frac{n-1}{2}.
$$
\end{theorem}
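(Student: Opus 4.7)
The plan is to reduce the definition of $B^-(G)$ to the Wiener index by swapping the order of summation and invoking a simple double count on shortest paths.

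First I would prove the pairwise identity: for any distinct $u,v\in V(G)$,
$$\sum_{w\ne u,v} b_w(u,v) \;=\; d(u,v)-1.$$
The justification is a double count. Each shortest $u$-$v$ path has length $d(u,v)$ and therefore contains exactly $d(u,v)-1$ internal vertices. Summing $\sigma_{uv}(w)$ over all $w\ne u,v$ counts ordered pairs (shortest $u$-$v$ path, internal vertex on that path), giving $\sigma_{uv}\bigl(d(u,v)-1\bigr)$. Dividing by $\sigma_{uv}$ (positive since $G$ is connected) yields the identity.

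Next, I would exchange the order of summation, reading the defining sum for $B_w$ as running over unordered pairs $\{u,v\}$ with $w\notin\{u,v\}$ (this is the convention consistent with the target formula):
$$\sum_{w\in V}B_w \;=\; \sum_{\{u,v\}\subseteq V}\ \sum_{w\ne u,v}b_w(u,v) \;=\; \sum_{\{u,v\}\subseteq V}\bigl(d(u,v)-1\bigr) \;=\; W(G)-\binom{n}{2},$$
using Definition \ref{c2}(1) for the Wiener index and the fact that there are $\binom{n}{2}$ unordered pairs. Dividing by $n$ delivers $B^-(G)=\dfrac{W(G)}{n}-\dfrac{n-1}{2}$.

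The only delicate point is pinning down the ordered-versus-unordered convention in the definition of $B_w$; the stated formula forces the unordered reading (the ordered version would produce $\tfrac{2W(G)}{n}-(n-1)$ instead). Once that convention is fixed, the shortest-path counting identity in the first step is the only combinatorial ingredient, and the remainder is straightforward bookkeeping.
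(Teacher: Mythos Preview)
Your argument is correct. The double-count identity $\sum_{w\ne u,v}\sigma_{uv}(w)=\sigma_{uv}\bigl(d(u,v)-1\bigr)$ is exactly the right combinatorial ingredient, and once you swap the order of summation the formula drops out as you describe. Your remark about the ordered-versus-unordered convention is also on point: the stated formula is consistent only with the unordered reading of the sum defining $B_w$.

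As for comparison with the paper: there is nothing to compare. The paper does not prove this theorem at all; it is quoted as a known result from Brandes, Borgatti, and Freeman \cite{BBF} and used as a black box in the proof of Theorem~\ref{cbc}. So you have supplied strictly more than the paper does here, namely a short self-contained proof where the paper offers only a citation.
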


Turaci Okten \cite{TM} established formulas for Closeness of Mycielski graphs corresponding to various graphs. In the following theorem we give formulas for the Closeness and the betweenness centrality of the Mycielski graph $\mu(G)$ of any given  graph $G$ using the Hosoya polynomial of $G$.

\ 

\begin{theorem} \label{cbc}
Let $G$ be any simple connected graph with $n$ vertices and $m$ edges. Assume $H(G,x)=\sum_{k=1}^{D(G)} a_kx^k$. Then
\begin{enumerate}
\item[(1)] The closeness $C(\mu (G)) = \frac{1}{16}(9n^2 + 23n + 38m + 14a_2 + 2a_3)$.
\item[(2)]
The Betweenness Centrality, $B^-(\mu(G)) = \frac{4n^2 - 2n - 7m - 4a_2 - a_3}{2n + 1}$.
\end{enumerate}
\end{theorem}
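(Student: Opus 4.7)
The plan is to express both quantities as immediate evaluations of objects already computed earlier in the paper, so that everything reduces to bookkeeping with the coefficients $b_1,b_2,b_3,b_4$ from Theorem \ref{hmy}.

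For part (1), the first step is to recognize that the closeness has a clean Hosoya-polynomial interpretation. Unpacking Definition \ref{c1},
$$C(G)=\sum_{i=1}^n\sum_{j\ne i}\frac{1}{2^{d(i,j)}}=2\sum_{i<j}\frac{1}{2^{d(i,j)}}=2\sum_{k=1}^{D(G)}\frac{d(G,k)}{2^k}=2\,H\!\left(G,\tfrac12\right).$$
Applying this to $\mu(G)$ gives $C(\mu(G))=2H(\mu(G),1/2)$. Then I would plug in $x=1/2$ into $H(\mu(G),x)=b_1x+b_2x^2+b_3x^3+b_4x^4$, using the closed forms of $b_1,\dots,b_4$ supplied by Theorem \ref{hmy}. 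Clearing denominators by writing $16\,C(\mu(G))=16\cdot 2H(\mu(G),1/2)=16b_1+8b_2+4b_3+2b_4$ (careful: the factor of 2 from $C=2H$ combines with the $1/2^k$ weights to give weights $16,8,4,2$ on $b_1,b_2,b_3,b_4$ respectively). Substituting the values of $b_i$ and collecting like terms in $m$, $n^2$, $n$, $a_2$, $a_3$ should yield exactly $9n^2+23n+38m+14a_2+2a_3$, giving the stated formula after dividing by $16$.

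For part (2), the work is essentially done already: by Theorem \ref{b-} applied to $\mu(G)$, which has $2n+1$ vertices,
$$B^-(\mu(G))=\frac{W(\mu(G))}{2n+1}-\frac{2n}{2}=\frac{W(\mu(G))}{2n+1}-n.$$
Then I would substitute the formula $W(\mu(G))=6n^2-n-7m-4a_2-a_3$ from Corollary \ref{ind}(1), put everything over the common denominator $2n+1$, and simplify the numerator $6n^2-n-7m-4a_2-a_3-n(2n+1)=4n^2-2n-7m-4a_2-a_3$.

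Neither step presents a genuine obstacle once the identification $C(G)=2H(G,1/2)$ is observed; the only place a careful reader can slip is in part (1), where one must remember that the closeness is defined as a double sum over \emph{ordered} pairs $(i,j)$ while $H(G,x)$ counts \emph{unordered} pairs, producing the factor of $2$ in front of $H(\mu(G),1/2)$ and thus the weights $16,8,4,2$ (rather than $8,4,2,1$) when clearing denominators. The rest is routine arithmetic with the coefficients from Theorem \ref{hmy}.
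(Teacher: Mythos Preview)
Your proposal is correct and follows exactly the paper's own route: the paper notes $C(\mu(G))=2H(\mu(G),0.5)$ and then says the results follow from Theorem~\ref{hmy}, Corollary~\ref{ind}, and Theorem~\ref{b-}, which is precisely what you carry out in detail. Your arithmetic checks out, and your remark about the factor of~$2$ coming from ordered versus unordered pairs is the one point the paper leaves implicit.
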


\begin{proof}
Note that $C(\mu(G))=2H(\mu(G), 0.5)$. Then the results follow from the Theorems \ref{hmy}, \ref{ind}, and \ref{b-}. 
\end{proof}

The closeness and the betweeness centrality of the Mycielskian graphs of $P_n$ and $S_n$ are calculated using Theorem \ref{cbc}. 

\begin{corollary}
 \label{clo} Let $n$ be any positive integer. Then
\begin{enumerate}
    \item $C(\mu(P_n))=\frac{9n^2+95n+14}{16}$ and $B^{-}(\mu(P_n))=\frac{4n^2-6n+8}{2n+3}$;
    \item $C(\mu(S_n))=\frac{16n^2+72n+32}{16}$  and $B^{-}(\mu(S_n))=\frac{2n^2+n+2}{2n+3}$.
\end{enumerate}
\end{corollary}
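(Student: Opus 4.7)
The strategy is straightforward: apply Theorem \ref{cbc} to each of $G = P_n$ and $G = S_n$, after reading off the four inputs the theorem requires — the vertex count, the edge count $m$, and the Hosoya coefficients $a_2 = d(G,2)$ and $a_3 = d(G,3)$. The only complication is notational: the symbol $n$ in Theorem \ref{cbc} denotes the number of vertices of the input graph $G$, whereas in the statement of this corollary $n$ is the family index of $P_n$ and $S_n$. So before simplifying I must substitute the actual vertex count (namely $n+1$ in both cases) into the theorem's formulas.

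For the path $G = P_n$, the paper has already observed that $H(P_n, x) = \sum_{k=1}^n (n-k+1)x^k$, so $P_n$ has $n+1$ vertices, $n$ edges, $a_2 = n-1$, and $a_3 = n-2$. Substituting $(n+1, n, n-1, n-2)$ in place of $(n, m, a_2, a_3)$ in the two formulas of Theorem \ref{cbc} and collecting terms yields the expressions in part (1).

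For the star $G = S_n$ (one center with $n$ leaves, consistent with the vertex count used in Corollary \ref{str}), the graph has $n+1$ vertices, $n$ edges, and diameter $2$, so $a_3 = 0$. Since any two distinct leaves are at distance exactly $2$, I have $a_2 = \binom{n}{2} = n(n-1)/2$. Plugging $(n+1, n, n(n-1)/2, 0)$ into Theorem \ref{cbc} and simplifying produces part (2).

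The proof reduces to routine polynomial manipulation, so there is no genuine mathematical obstacle — the only place where care is needed is the clash of notation between the family index and the theorem's vertex-count variable, together with the expansion of $\binom{n}{2}$ in the star case. In both cases, one should double-check the final answer against the special values $n = 2$ (where $P_2$ already appears in Example \ref{mup2}) as a sanity test.
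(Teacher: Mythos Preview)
Your approach is correct and matches the paper's own proof, which simply cites Theorem \ref{cbc} together with Corollaries \ref{str} and \ref{path}; you have merely spelled out the substitutions (including the care needed over the shift from the family index $n$ to the vertex count $n+1$) that the paper leaves implicit. The only caveat is that your parameters $a_2=n-1$ and $a_3=n-2$ for $P_n$ presuppose $n\ge 3$ (for $n=1$ one has $a_3=0$, not $-1$), so the derivation as written is literally valid only for $n\ge 2$, but this is an issue with the corollary's stated range rather than with your method.
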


\proof
The results follow from Theorem \ref{cbc} and Corollaries \ref{str} and \ref{path}. 

\ 

We end the paper with an interesting comparison of the clossness and betweeness of the graphs $\mu(P_n)$  and $\mu(S_n)$. It is natural to think that vertices in $\mu(S_n)$ are ''closer'' than those in $\mu(P_n)$ because of the ''one-center''property of $S_n$ and the stretchness of $P_n$. From Corollary \ref{clo}, we calculate the difference of the closeness. For $n\ge 2$,
$$C(\mu(S_n))-C(\mu(P_n))=\frac{7n^2-23n+18 }{16}\ge 0.$$

The difference is positive for all $n>2$ and is bigger when $n$ is larger. In other words, the closeness of $\mu(S_n)$ is bigger than that of $\mu(P_n)$ and the difference increases  when $n$ increases. It well reflects the ''stretch'' property of $P_n$ and the ''clustering'' property of $S_n$.  Amazingly, when $n=2$, the difference is 0, that is, $C(\mu(S_2))-C(\mu(P_2))=0$, meaning $\mu(S_2)$ and $\mu(P_2)$ have the same closeness. Actually, $S_2\cong P_2$. Of course their Mycielskian graphs have the same closeness.

\section{Discussion and Conclusion}

In the last twenty years, many scientists introduced vulnerability measures and topological indices of graphs to solve various problems such as the stability of communication networks and development of Quantitative Structure-Property Relationships (QSPR) models of chemical compounds. The pioneering work of Harry Wiener inspired many mathematicians and scientists to define more topological indices and to study their applications to various fields such as medicine and network science. 

Since there are many vulnerability measures and topological indices, there are open problems of finding bounds, formulas, and usefulness of them. The French mathematician Joseph Fourier (1768-1820) said, ``Mathematics compares the most diverse phenomena and discovers the secret analogies that unite them''. In this paper we showed that how Hosoya polynomials give powerful tools to compute vulnerability measures and topological indices which are crucial in two diverse fields, network communications and chemistry. This is simply amazing!

\end{document}